\theoremstyle{plain}
\newtheorem{theorem}{Theorem}[section]
\newtheorem{lemma}[theorem]{Lemma}
\newtheorem{corollary}[theorem]{Corollary}
\newtheorem{conjecture}[theorem]{Conjecture}
\theoremstyle{definition}
\newtheorem{definition}[theorem]{Definition}
\newtheorem{example}[theorem]{Example}
\newtheorem{remark}[theorem]{Remark}
\begin{document}

\title[Commuting maps on certain incidence algebras]
{Commuting maps on certain incidence algebras}

\author{Hongyu Jia and Zhankui Xiao}

\address{Jia: School of Mathematical Sciences, Huaqiao University,
Quanzhou, Fujian, 362021, P. R. China}
\email{hongyu.jia@hqu.edu.cn}

\address{Xiao: Fujian Province University Key Laboratory of Computation Science,
School of Mathematical Sciences, Huaqiao University,
Quanzhou, Fujian, 362021, P. R. China}

\email{zhkxiao@hqu.edu.cn}

\begin{abstract}
Let $\mathcal{R}$ be a $2$-torsion free commutative ring with unity, $X$ a locally finite pre-ordered set
and $I(X,\mathcal{R})$ the incidence algebra of $X$ over $\mathcal{R}$.
If $X$ consists of a finite number of connected components,
in this paper we give a sufficient and necessary condition for each
commuting map on $I(X,\mathcal{R})$ being proper.
\end{abstract}

\subjclass[2010]{Primary 16W25, Secondary 15A78, 47L35}

\keywords{commuting map, incidence algebra}

\thanks{This work is partially supported by the NSF of Fujian Province (No. 2018J01002)
and the National Natural Science Foundation of China (No. 11301195).}

\maketitle

\section{Introduction}\label{xxsec1}

Let $A$ be an associative algebra over $\mathcal{R}$, a commutative ring with unity. Then
$A$ has the Lie algebra structure under the Lie bracket $[x,y]:=xy-yx$. An $\mathcal{R}$-linear map
$\theta: A\rightarrow A$ is called a {\em commuting map} if $[\theta(x),x]=0$ for all $x\in A$.
A commuting map $\theta$ of $A$ is said to be {\em proper} if it can be written as
$$
\theta(x)=\lambda x+\mu(x), \quad \forall x\in A,
$$
where $\lambda\in Z(A)$, the center of $A$, and $\mu$ is an $\mathcal{R}$-linear map with
image in $Z(A)$. A commuting map which is not proper will be called {\em improper}.
The purpose of this paper is to identify a class of algebras on which every
commuting map is proper.

As far as we know, the first result about commuting maps is due to Posner. He showed in \cite{Posner}
that the existence of a nonzero commuting derivation on a prime algebra $A$ implies the
commutativity of $A$. One of the major promoter for studying commuting maps is Bre\v{s}ar.
He \cite{Bre93-2} initially related the commuting maps to some Herstein's conjectures \cite{Her} which described
the forms of Lie-type maps (for example, Lie isomorphisms, Lie derivations) on associative
simple or prime rings. We warmly encourage the reader to read the well-written survey paper \cite{Bre04},
in which the author presented the development of the theory of commuting maps and their applications,
especially to Lie theory. More results related to commuting maps are considered in
\cite{Bre91,Bre93-1,ChenCai,Cheung,Lee,LeeLee,XiaoWei} etc.

We now recall another notion, incidence algebras \cite{SpDo}, with which we deal in this paper.
Let $(X,\leqslant)$ be a locally finite pre-ordered set (i.e., $\leqslant$ is a reflexive and transitive binary relation).
Here the local finiteness means for any
$x\leqslant y$ there are only finitely many elements $z\in X$ satisfying $x\leqslant z\leqslant y$.
The {\em incidence algebra} $I(X,\mathcal{R})$ of $X$ over $\mathcal{R}$ is defined on the set of functions
$$
I(X,\mathcal{R}):=\{f: X\times X\longrightarrow \mathcal{R}\mid f(x,y)=0\ \text{if}\ x\nleqslant y\}
$$
with the natural $\mathcal{R}$-module structure and the multiplication given by the convolution
$$
(fg)(x,y):=\sum_{x\leqslant z\leqslant y}f(x,z)g(z,y)
$$
for all $f,g\in I(X,\mathcal{R})$ and $x,y\in X$. It would be helpful to claim that the full matrix algebra ${\rm M}_n(\mathcal{R})$,
the upper (or lower) triangular matrix algebras ${\rm T}_n(\mathcal{R})$, and the infinite
triangular matrix algebras ${\rm T}_{\infty}(\mathcal{R})$ are examples of incidence algebras.
The incidence algebra of a partially ordered set was first considered by Ward \cite{Wa}
as a generalized algebra of arithmetic functions. Rota and Stanley developed incidence algebras
as fundamental structures of enumerative combinatorial theory and the allied areas of arithmetic function
theory (see \cite{Stanley}).

Recently, the second author \cite{Xiao} studied the Herstein's Lie-type mapping research program (see \cite{Bre04})
on incidence algebras and proved that every Jordan derivation of $I(X,\mathcal{R})$ degenerates to
a derivation, provided that $\mathcal{R}$ is $2$-torsion free. Since then more and more Lie-type
maps were considered on incidence algebras, see \cite{Khry,WangXiao,ZhangKh} etc. Roughly speaking,
all the known Lie-type maps of $I(X,\mathcal{R})$ are proper or of the standard form. On the other hand,
it is well-known that the commuting maps of the triangular matrix algebras ${\rm T}_n(\mathcal{R})$
and the full matrix algebra ${\rm M}_n(\mathcal{R})$ are proper (see \cite{Cheung,XiaoWei}). Therefore,
it is hopefully that every commuting map of the incidence algebra $I(X,\mathcal{R})$ is proper.
Unfortunately, we can find a counter-example as follows. Let $X$ be the partially ordered set with
Hasse diagram $\overset{1}{\circ}\leftarrow\overset{3}{\circ}\rightarrow\overset{2}{\circ}$. Then
$$
I(X,\mathcal{R})=\left\{\left. \left(
\begin{array}
[c]{ccc}%
a_{11} & 0 & a_{13}\\
0 & a_{22} & a_{23}\\
0 & 0 & a_{33}\\
\end{array}
\right) \right|\ a_{ij}\in\mathcal{R} \right\}.
$$
The reader can verify that the $\mathcal{R}$-linear map $\theta$ defined by
$$
\theta: \left(
\begin{array}
[c]{ccc}%
a_{11} & 0 & a_{13}\\
0 & a_{22} & a_{23}\\
0 & 0 & a_{33}\\
\end{array}
\right)\longmapsto \left(
\begin{array}
[c]{ccc}%
a_{11} & 0 & a_{13}\\
0 & a_{33} & 0\\
0 & 0 & a_{33}\\
\end{array}
\right)
$$
is a commuting map, but generally improper. Hence it is fascinating to study the relationship
between commuting maps and the structure of incidence algebras, especially the combinatorial
structure. This is our main motivation of this paper.

\section{The finite rank case}\label{xxsec2}

\emph{From now on, we always assume that all the rings and algebras are $2$-torsion free}.
In this section, we study commuting maps of the incidence algebra $I(X,\mathcal{R})$ when $X$ is a {\em finite} pre-ordered set.
For reader's convenience, let's start with two well-known results for general algebras.

\begin{lemma}\label{sec2.1} {\rm (\cite[Proposition 2]{Cheung})}
Let $A,B$ be two $\mathcal{R}$-algebras.
Then $A$ and $B$ have no improper commuting maps if and only if $A\oplus B$ has no improper commuting maps.
\end{lemma}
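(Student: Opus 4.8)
The plan is to prove the two implications separately, exploiting throughout that the multiplication and the Lie bracket of $A\oplus B$ are computed componentwise, so that $Z(A\oplus B)=Z(A)\oplus Z(B)$. For the easy direction, suppose $A\oplus B$ has no improper commuting maps and let $\phi$ be a commuting map on $A$. I would inflate $\phi$ to $\theta(a,b):=(\phi(a),0)$ on $A\oplus B$. Since $[\theta(a,b),(a,b)]=([\phi(a),a],0)=0$, this $\theta$ is commuting, hence proper by hypothesis: $\theta(a,b)=\lambda(a,b)+\mu(a,b)$ with $\lambda=(\lambda_1,\lambda_2)\in Z(A)\oplus Z(B)$ and $\mu$ having central image. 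Reading off the first coordinate at $b=0$ gives $\phi(a)=\lambda_1 a+\mu_1(a,0)$, where $a\mapsto\mu_1(a,0)$ is $\mathcal{R}$-linear with image in $Z(A)$; thus $\phi$ is proper, and the symmetric inflation through the second factor handles $B$.

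For the converse, assume neither $A$ nor $B$ admits an improper commuting map, and let $\theta$ be a commuting map on $A\oplus B$, written coordinatewise as $\theta(a,b)=(\theta_1(a,b),\theta_2(a,b))$. Because the bracket is componentwise, the single identity $[\theta(a,b),(a,b)]=0$ decouples into $[\theta_1(a,b),a]=0$ and $[\theta_2(a,b),b]=0$ for all $a\in A$ and $b\in B$. I would then use $\mathcal{R}$-linearity to split $\theta_1(a,b)=\theta_1(a,0)+\theta_1(0,b)$. Putting $b=0$ shows that $f:=\theta_1(\cdot,0)$ is a commuting map on $A$, and the first identity then forces $[\theta_1(0,b),a]=0$ for every $a$, i.e. the cross-term $g:=\theta_1(0,\cdot)$ takes values in $Z(A)$. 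This observation --- that the off-diagonal contribution is automatically central --- is the crux of the argument.

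It remains to assemble the pieces. By hypothesis $f(a)=\lambda_1 a+\mu_1(a)$ with $\lambda_1\in Z(A)$ and $\mu_1\colon A\to Z(A)$ linear, so $\theta_1(a,b)=\lambda_1 a+\bigl(\mu_1(a)+g(b)\bigr)$ with the bracketed term central in $A$; symmetrically $\theta_2(a,b)=\lambda_2 b+\bigl(\mu_2(b)+h(a)\bigr)$ with $\lambda_2\in Z(B)$ and bracketed term in $Z(B)$. Setting $\lambda:=(\lambda_1,\lambda_2)$ and $\mu(a,b):=\bigl(\mu_1(a)+g(b),\,\mu_2(b)+h(a)\bigr)$, the latter being $\mathcal{R}$-linear with image in $Z(A)\oplus Z(B)=Z(A\oplus B)$, yields $\theta(a,b)=\lambda(a,b)+\mu(a,b)$, so $\theta$ is proper. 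I expect no serious obstacle beyond this bookkeeping: the only substantive point is the centrality of the cross-terms $\theta_1(0,\cdot)$ and $\theta_2(\cdot,0)$, which is precisely what allows the two factorwise decompositions to be glued into a single proper form on $A\oplus B$.
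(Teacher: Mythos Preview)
Your argument is correct. The paper does not supply its own proof of this lemma; it simply quotes the result from \cite[Proposition~2]{Cheung}, so there is nothing to compare against beyond noting that your approach is the standard one and almost certainly matches Cheung's. The only substantive step, as you identify, is that from $[\theta_1(a,b),a]=0$ for all $(a,b)$ together with $[\theta_1(a,0),a]=0$ one gets $[\theta_1(0,b),a]=0$, forcing the cross-term into $Z(A)$; your bookkeeping from there is fine, and no use of $2$-torsion freeness is needed.
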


\begin{lemma}\label{sec2.2}
Let $A$ be an $\mathcal{R}$-algebra with an $\mathcal{R}$-linear basis $Y$. Then an $\mathcal{R}$-linear map
$\theta: A\rightarrow A$ is a commuting map if and only if
$[\theta(x),y]=[x,\theta(y)]$ for all $x,y\in Y$.
\end{lemma}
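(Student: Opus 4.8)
The plan is to prove both implications by the standard linearization of the quadratic identity $[\theta(x),x]=0$, exploiting that the commutator $[\,\cdot\,,\,\cdot\,]$ is $\mathcal{R}$-bilinear (the scalars from $\mathcal{R}$ act centrally on $A$, so they pull through the bracket).

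For the forward direction, suppose $\theta$ is commuting. For any $x,y\in A$ I would expand $0=[\theta(x+y),x+y]$ using the $\mathcal{R}$-linearity of $\theta$ and the bilinearity of the bracket, obtaining
$$[\theta(x),x]+[\theta(x),y]+[\theta(y),x]+[\theta(y),y]=0.$$
The first and last summands vanish by hypothesis, leaving $[\theta(x),y]=-[\theta(y),x]=[x,\theta(y)]$, which holds in particular for all $x,y\in Y$. Note this implication uses neither the basis nor 2-torsion freeness.

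Conversely, assume $[\theta(x),y]=[x,\theta(y)]$ for all $x,y\in Y$; since $[x,\theta(y)]=-[\theta(y),x]$, this says $[\theta(x),y]=-[\theta(y),x]$ for all $x,y\in Y$. Taking $y=x$ gives $2[\theta(x),x]=0$, whence $[\theta(x),x]=0$ for each $x\in Y$ by 2-torsion freeness. Now fix an arbitrary $z\in A$ and write $z=\sum_i r_ix_i$ as a finite $\mathcal{R}$-combination with $r_i\in\mathcal{R}$ and $x_i\in Y$. Setting $c_{ij}:=[\theta(x_i),x_j]$, the hypothesis yields $c_{ij}=-c_{ji}$ and $c_{ii}=0$, and using the $\mathcal{R}$-linearity of $\theta$ together with the centrality of the scalars I compute
$$[\theta(z),z]=\Big[\sum_i r_i\theta(x_i),\sum_j r_jx_j\Big]=\sum_{i,j}r_ir_jc_{ij}.$$

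To finish I would split this double sum into its diagonal and off-diagonal parts. The diagonal part $\sum_i r_i^2 c_{ii}$ vanishes because each $c_{ii}=0$, while in the off-diagonal part the contributions of $(i,j)$ and $(j,i)$ cancel in pairs, since $r_ir_j=r_jr_i$ by commutativity of $\mathcal{R}$ whereas $c_{ij}=-c_{ji}$. Hence $[\theta(z),z]=0$ for every $z\in A$, so $\theta$ is commuting. The only genuinely delicate point is the vanishing of the diagonal terms, which is precisely where the 2-torsion-free assumption enters; the remainder is bookkeeping with bilinearity and the antisymmetry relation $c_{ij}=-c_{ji}$.
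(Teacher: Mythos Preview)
Your proof is correct. The paper does not supply a proof of this lemma at all; it merely introduces it as one of ``two well-known results for general algebras'' and moves on, so there is nothing to compare against beyond noting that your argument is the standard one and that you correctly identify the single point---the diagonal terms $c_{ii}=0$---where the ambient $2$-torsion-free hypothesis (declared globally just before the lemma) is genuinely needed.
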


We now introduce some standard notations for the incidence algebra $I(X,\mathcal{R})$.
The unity element $\delta$ of $I(X,\mathcal{R})$ is given by $\delta(x,y)=\delta_{xy}$ for $x\leqslant y$,
where $\delta_{xy}\in \{0,1\}$ is the Kronecker delta. If $x,y\in X$ with $x\leqslant y$, let $e_{xy}$ be
defined by $e_{xy}(u,v)=1$ if $(u,v)=(x,y)$, and $e_{xy}(u,v)=0$ otherwise. Then $e_{xy}e_{uv}=\delta_{yu}e_{xv}$
by the definition of convolution. Clearly the set $\mathfrak{B}:=\{e_{xy}\mid x\leqslant y\}$ forms an $\mathcal{R}$-linear
basis of $I(X,\mathcal{R})$ when $X$ is finite. For $i\leqslant j$ and $i\neq j$, we write $i<j$ or $j>i$ for short.

When $X$ is finite, $I(X,\mathcal{R})$ is also known as a digraph algebra. This means that
there is a directed graph with the vertex set $X$ associated with $I(X,\mathcal{R})$. This graph contains all the
self loops and the matrix unit $e_{xy}$ corresponds to a directed edge from $y$ to $x$.
We now define an equivalent relation on $\mathfrak{D}:=\{(x,y)\mid x< y\}$, the set of all directed edges
associated with the pre-ordered set $X$.
Let $\sim$ be the relation on $X$ defined by $x\sim y$ if and only if $x\leqslant y$ or $y\leqslant x$, in other words,
there is at least one directed edge between the vertices $x$ and $y$.
We also write $y\sim x$ for $x\sim y$ by the symmetry.
Let $x_1,x_2,\ldots,x_n$ be $n$ different vertices in $X$ with $n\geq 2$. We say that
$\{x_1,x_2,\ldots,x_n\}$ forms a {\em circle} if
\begin{enumerate}
\item[(i)] $x_1\sim x_2$ when $n=2$;

\item[(ii)] $x_1\sim x_2$, \ldots, $x_{i-1}\sim x_i$, \ldots, $x_{n-1}\sim x_n$, $x_n\sim x_1$ when $n\geq 3$.
\end{enumerate}

\begin{definition}\label{sec2.3}
For any two directed edges $(x,y)$ and $(u,v)$ in $\mathfrak{D}$, we define $(x,y)\cong (u,v)$ if
and only if there is a circle contains both $x\sim y$ and $u\sim v$.
\end{definition}

The reader can verify that the binary relation $\cong$ is in deed an equivalent relation
on the set $\mathfrak{D}$. We remark here that the $n=2$ case does not satisfy the standard
notion, circle, in combinatorial theory, but it provides the reflexivity of the relation
$\cong$. The example appeared in the introduction, where $(1,3)\ncong (2,3)$, and the following Example \ref{sec2.4}
can help us get a more clear picture for the relation $\cong$.

\begin{example}\label{sec2.4}
Let $X=\{1,2,3,4\}$ with partially ordered relations (or arrows) $1<2, 2<3, 2<4$. Then the associated set of
directed edges is $\mathfrak{D}=\{(1,2), (2,3), (1,3),\\ (2,4), (1,4)\}$. We have $(2,3)\cong (2,4)$,
since $2\sim 3, 3\sim 1, 1\sim 4, 4\sim 2$, and $\mathfrak{D}$ forms one equivalent class
under the relation $\cong$.
\end{example}

The main result of this section is as follows.

\begin{theorem}\label{main in section 2}
Let $\mathcal{R}$ be a $2$-torsion free commutative ring with unity and $X$ be finite.
Then every commuting map $\theta$ of $I(X,\mathcal{R})$ is proper if and only if
the set of all directed edges associated with each connected component forms
one equivalent class under the relation $\cong$.
\end{theorem}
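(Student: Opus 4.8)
The plan is to reduce to the connected case and then convert the commuting condition into a system of scalar relations indexed by edges, whose solvability by a single global scalar is governed precisely by the relation $\cong$. First I would invoke Lemma~\ref{sec2.1}: since $X$ is finite it splits into finitely many connected components $X_1,\dots,X_k$, there are no order relations between distinct components, so $I(X,\mathcal{R})=\bigoplus_i I(X_i,\mathcal{R})$ and it suffices to treat each $I(X_i,\mathcal{R})$ separately. Hence assume $X$ is connected, in which case a direct check shows $Z(I(X,\mathcal{R}))=\mathcal{R}\delta$; thus properness means $\theta(f)=\lambda f+\mu(f)\delta$ with $\lambda\in\mathcal{R}$ and $\mu(f)\in\mathcal{R}$. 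Using Lemma~\ref{sec2.2}, a commuting map is determined by the equations $[\theta(e_{pq}),e_{rs}]=[e_{pq},\theta(e_{rs})]$ over the basis $\mathfrak{B}$, and I record $[e_{ab},e_{cd}]=\delta_{bc}e_{ad}-\delta_{da}e_{cb}$ as the workhorse identity.

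For sufficiency I would proceed in two stages. Stage one analyses the idempotents: from $[\theta(e_{xx}),e_{xx}]=[e_{xx},\theta(e_{xx})]$ and $2$-torsion freeness, $\theta(e_{xx})$ has no off-diagonal entry in row or column $x$; feeding the cross-equations $[\theta(e_{xx}),e_{yy}]=[e_{xx},\theta(e_{yy})]$ and $[\theta(e_{xx}),e_{xy}]=[e_{xx},\theta(e_{xy})]$ back in should force every $\theta(e_{xx})$ to be \emph{diagonal}, so it is encoded by a matrix $P_{x,y}$ (the $e_{yy}$-coefficient of $\theta(e_{xx})$). Stage two analyses an edge $x<y$: comparing the $e_{xy}$-components of $[\theta(e_{xy}),e_{xx}]=[e_{xy},\theta(e_{xx})]$ and of the companion equation at $e_{yy}$ yields $\lambda_{xy}:=[\theta(e_{xy})]_{e_{xy}}=P_{x,x}-P_{x,y}=P_{y,y}-P_{y,x}$, while the remaining components should show that $\theta(e_{xy})-\lambda_{xy}e_{xy}$ is a scalar multiple of $\delta$. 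Finally, whenever two edges compose, e.g. $x<y<z$ with $[e_{xy},e_{yz}]=e_{xz}$, matching the $e_{xz}$-component of $[\theta(e_{xy}),e_{yz}]=[e_{xy},\theta(e_{yz})]$ forces $\lambda_{xy}=\lambda_{yz}=\lambda_{xz}$.

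The main obstacle is the combinatorial propagation step: I must show that the hypothesis ``$\mathfrak{D}$ is a single $\cong$-class'' is exactly what is needed to conclude that all the $\lambda_{xy}$ coincide and that $P_{x,y}$ is independent of $y$. The point is that adjacency $x\sim y$ in a circle need not make the corresponding edges composable (two edges sharing only a head, such as $(1,3)$ and $(2,3)$ in the introduction, are not), so the equalities $\lambda_{xy}=\lambda_{uv}$ cannot be read off one adjacency at a time. Instead I would prove that within one circle the transitively induced edges always supply a chain of composable triples linking any two edges; this is the content of Example~\ref{sec2.4}, where $(2,3)$ and $(2,4)$ are tied together through the common vertex $1$ via $(1,3)=(1,2)(2,3)$ and $(1,4)=(1,2)(2,4)$. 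Establishing that the circle condition is equivalent to the existence of such composable chains, and hence to $\lambda$ being globally constant, is the crux; granting it, one sets $\mu(e_{xx}):=P_{x,y}$ (now well defined, independent of $y$) and reads off $\theta(f)=\lambda f+\mu(f)\delta$.

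For necessity I would argue contrapositively: if some connected component carries at least two $\cong$-classes, I construct an improper commuting map by assigning different scalars to two classes and then solving the relations $\lambda_{xy}=P_{x,x}-P_{x,y}$ for a compatible diagonal $P$, exactly as in the example of the introduction. Commutativity is verified on basis pairs through Lemma~\ref{sec2.2}, while the construction is improper because the two differing scalars would, for a proper map, both have to equal the single global $\lambda$. This completes the equivalence.
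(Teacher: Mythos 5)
Your overall architecture---reduce to a connected component, show that $\theta(e_{xx})$ is diagonal and that $\theta(e_{xy})$ is diagonal plus a multiple of $e_{xy}$, extract scalar relations from the commuting identities on basis pairs, and then propagate those relations combinatorially---is exactly the paper's (Lemmas \ref{sec2.6} and \ref{sec2.7}), and your necessity sketch matches the paper's counterexample construction. The problem is the step you yourself identify as the crux. You propose to prove that the hypothesis ``$\mathfrak{D}$ is a single $\cong$-class'' is \emph{equivalent} to any two edges being linked by a chain of composable triples $x<y<z$, and to propagate the edge scalars $\lambda_{xy}$ only through such triples. That equivalence is false. Take $X=\{1,2,3,4\}$ with the only strict relations $1<2$, $3<2$, $3<4$, $1<4$ (a ``crown''): this is a finite partial order, $\{1,2,3,4\}$ is a circle containing all four adjacencies, so the four edges $(1,2),(3,2),(3,4),(1,4)$ form one $\cong$-class; yet no vertex is simultaneously a head and a tail, transitivity induces no new edges, and hence there is not a single composable pair. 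Your mechanism therefore yields no equalities among the $\lambda_{xy}$ at all, while the theorem (correctly) asserts that every commuting map on this algebra is proper. The same issue afflicts your claim that the circle condition forces $P_{x,y}$ to be independent of $y$: composable chains of edges do not touch the diagonal coefficients.

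What actually closes the sufficiency argument is a different propagation, which is where the paper spends its effort. Besides the composability relation (the paper's (R5)), the commuting identities give, for each fixed $i$, the relation $C^{ii}_{kk}=C^{ii}_{ll}$ whenever $k<l$ and $k\neq i\neq l$ ((R3)), together with $C^{ij}_{ij}=C^{ii}_{ii}-C^{ii}_{jj}$ and $C^{ki}_{ki}=C^{ii}_{ii}-C^{ii}_{kk}$ ((R1)--(R2)). The real role of the circle hypothesis is to guarantee that any two vertices $x,y\neq i$ are joined by a path in the comparability graph that \emph{avoids} $i$: a circle through $x$ and $y$ provides two internally disjoint routes between them, at most one of which can pass through $i$. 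Iterating (R3) along the $i$-avoiding path gives $C^{ii}_{xx}=C^{ii}_{yy}$ for all $x\neq i\neq y$, and (R1)--(R2) then convert this into $\lambda_{ij}=\lambda_{il}$ for edges sharing a tail and $\lambda_{ij}=\lambda_{kj}$ for edges sharing a head; combined with (R5) and connectivity, $\lambda$ becomes globally constant, and this handles the crown where your mechanism is empty. (Your Example~\ref{sec2.4} computation happens to work because there composable triples exist, but that is an accident of the example.) So you must replace your claimed equivalence with this ``$i$-avoiding path'' lemma; as written, the sufficiency half of your proof does not go through.
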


We only need to prove Theorem \ref{main in section 2} when $X$ is connected. In fact, assume that
$X=\bigsqcup_{i\in I}X_i$ be the union of its distinct connected components, where $I$ is a finite index set.
Let $\delta_i:=\sum_{x\in X_i} e_{xx}$. It follows from \cite[Theorem 1.3.13]{SpDo} that $\{\delta_i\mid i\in I\}$ forms
a complete set of central primitive idempotents. In other words, $I(X,\mathcal{R})=\bigoplus_{i\in I}\delta_i
I(X,\mathcal{R})$. Clearly $\delta_i I(X,\mathcal{R})\cong I(X_i,\mathcal{R})$ for each $i\in I$.
Hence we only need to prove Theorem \ref{main in section 2} when $X$ is connected by Lemma \ref{sec2.1}.

From now on until the end of this section, we assume $X$ is finite and connected.
Let $\theta: I(X,\mathcal{R})\rightarrow I(X,\mathcal{R})$ be an arbitrary commuting map. We denote
for all $i,j\in X$ with $i\leqslant j$
$$
\theta(e_{ij})=\sum_{e_{xy}\in \mathfrak{B}}C_{xy}^{ij}e_{xy}.
$$
We also make the convention $C_{xy}^{ij}=0$, if needed, for $x\nleqslant y$.

\begin{lemma}\label{sec2.6}
The commuting map $\theta$ satisfies
\begin{align}
\theta(e_{ii})&=\sum_{x\in X}C_{xx}^{ii}e_{xx};\label{(1)}\\
\theta(e_{ij})&=\sum_{x\in X}C_{xx}^{ij}e_{xx}+C_{ij}^{ij}e_{ij}, \hspace{6pt} \text{if}\ i\neq j.\label{(2)}
\end{align}
\end{lemma}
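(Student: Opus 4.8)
The plan is to exploit Lemma~\ref{sec2.2}, which lets us replace the defining condition $[\theta(x),x]=0$ by the polarized identity $[\theta(u),v]=[u,\theta(v)]$ tested only on the basis $\mathfrak{B}$. The whole argument then reduces to expanding both sides for well-chosen pairs of matrix units via $e_{xy}e_{uv}=\delta_{yu}e_{xv}$ and comparing coefficients of the (linearly independent) basis elements. For \eqref{(1)}, I would first take $u=v=e_{ii}$, i.e. use $[\theta(e_{ii}),e_{ii}]=0$. Since $[e_{xy},e_{ii}]=\delta_{yi}e_{xi}-\delta_{xi}e_{iy}$, this collapses to $\sum_{x<i}C_{xi}^{ii}e_{xi}-\sum_{y>i}C_{iy}^{ii}e_{iy}=0$, whence $C_{xi}^{ii}=0$ for $x<i$ and $C_{iy}^{ii}=0$ for $y>i$; that is, the off-diagonal entries of $\theta(e_{ii})$ lying in the $i$-th row or $i$-th column already vanish.

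To kill the remaining off-diagonal coefficients $C_{xy}^{ii}$ with $x\neq i\neq y$, I would feed the pair $(e_{ii},e_{jj})$ with $j\neq i$ into Lemma~\ref{sec2.2}. Computing $[\theta(e_{ii}),e_{jj}]$ produces only matrix units of the form $e_{xj}$ ($x<j$) and $e_{jy}$ ($y>j$), while $[e_{ii},\theta(e_{jj})]$ produces only units of the form $e_{iy}$ ($y>i$) and $e_{xi}$ ($x<i$). The key point is structural rather than computational: every basis element on the left carries $j$ as a row or column index, and every one on the right carries $i$; since $i\neq j$, a unit $e_{xj}$ with $x\neq i$ simply cannot occur on the right, and so its coefficient is forced to be $0$, giving $C_{xj}^{ii}=0$. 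Letting $j=y$ range over all admissible values then annihilates $C_{xy}^{ii}$ for every off-diagonal $(x,y)$ with $x\neq i$, and the row/column case handled above covers $x=i$ or $y=i$. This yields \eqref{(1)}.

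For \eqref{(2)} I would again apply Lemma~\ref{sec2.2}, now to the pair $(e_{ij},e_{kk})$ and with \eqref{(1)} in hand: because $\theta(e_{kk})=\sum_x C_{xx}^{kk}e_{xx}$ is diagonal, the right-hand side $[e_{ij},\theta(e_{kk})]=(C_{jj}^{kk}-C_{ii}^{kk})e_{ij}$ is a scalar multiple of $e_{ij}$ alone, whereas the left-hand side $[\theta(e_{ij}),e_{kk}]$ is a combination of the units $e_{xk}$ ($x<k$) and $e_{ky}$ ($y>k$). Matching coefficients, any such unit distinct from $e_{ij}$ must vanish: choosing $k=y$ disposes of $C_{xy}^{ij}$ whenever $y\neq j$, and choosing $k=x$ disposes of it whenever $x\neq i$, so every off-diagonal position other than $(i,j)$ is reached. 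The delicate part throughout is the bookkeeping—verifying that the listed matrix units are genuinely pairwise distinct, so that coefficient comparison is legitimate, and that the two case choices $k=y$ and $k=x$ together exhaust every off-diagonal entry. I expect the coverage argument for \eqref{(1)}, where both sides still carry unknown coefficients and the conclusion must come from the mismatch of index sets rather than from solving, to be the main obstacle; once it is settled, \eqref{(2)} follows routinely.
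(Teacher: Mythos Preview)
Your proposal is correct and follows essentially the same approach as the paper: both arguments feed the pairs $(e_{ii},e_{ii})$, $(e_{ii},e_{jj})$, and $(e_{ij},e_{kk})$ into the polarized identity of Lemma~\ref{sec2.2} and read off the vanishing of the unwanted coefficients. The only cosmetic difference is that the paper isolates a single coefficient by sandwiching each commutator identity with $e_{xx}$ on the left and $e_{yy}$ on the right, whereas you extract the same information by comparing coefficients against the basis $\mathfrak{B}$; these are equivalent bookkeeping devices.
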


\begin{proof}
Without loss of generality, we assume that $|X|\geq 2$. Since $[\theta(e_{ii}),e_{xx}]=[e_{ii},\theta(e_{xx})]$
for any $i,x\in X$, multiplying this identity by $e_{xx}$ on the left and by $e_{yy}$ on the right, we have
$$
C^{ii}_{xy}=0,\ \text{if}\ i\neq x<y\neq i.\eqno(3)
$$
Then left multiplication by $e_{xx}$ on $[\theta(e_{ii}),e_{ii}]=0$ leads to
$$
C^{ii}_{xi}=0,\ \text{if}\ x<i.\eqno(4)
$$
Similarly, right multiplication by $e_{yy}$ on $[\theta(e_{ii}),e_{ii}]=0$ leads to
$$
C^{ii}_{iy}=0,\ \text{if}\ i<y.\eqno(5)
$$
Combining the identities (3), (4) and (5), we obtain
$$\begin{aligned}
\theta(e_{ii})&=\sum_{x\leqslant y}C_{xy}^{ii}e_{xy}=\sum_{x\in X}C_{xx}^{ii}e_{xx}+\sum_{x<y}C_{xy}^{ii}e_{xy}\\
&=\sum_{x\in X}C_{xx}^{ii}e_{xx}+\sum_{i\neq x<y\neq i}C_{xy}^{ii}e_{xy}+\sum_{x<i}C_{xi}^{ii}e_{xi}+\sum_{i<y}C_{iy}^{ii}e_{iy}\\
&=\sum_{x\in X}C_{xx}^{ii}e_{xx},
\end{aligned}$$
which proves (1).

Noting that $[\theta(e_{ij}),e_{yy}]=[e_{ij},\theta(e_{yy})]$
for any $i<j$ and $y\in X$, multiplying this identity by $e_{xx}$ on the left and by $e_{yy}$ on the right, we have
$$
C^{ij}_{xy}=0,\ \text{if}\ i\neq x<y\neq j.\eqno(6)
$$
Taking the identity (1) into account, then left multiplication by $e_{xx}$ on $[\theta(e_{ij}),e_{jj}]=[e_{ij},\theta(e_{jj})]$ leads to
$$
C^{ij}_{xj}=0,\ \text{if}\ i\neq x<j.\eqno(7)
$$
Similarly, right multiplication by $e_{yy}$ on $[\theta(e_{ij}),e_{ii}]=[e_{ij},\theta(e_{ii})]$ leads to
$$
C^{ij}_{iy}=0,\ \text{if}\ i<y\neq j.\eqno(8)
$$
Combining the identities (6), (7) and (8), we obtain
$$\begin{aligned}
\theta(e_{ij})&=\sum_{x\leqslant y}C_{xy}^{ij}e_{xy}=\sum_{x\in X}C_{xx}^{ij}e_{xx}+\sum_{x<y}C_{xy}^{ij}e_{xy}\\
&=\sum_{x\in X}C_{xx}^{ij}e_{xx}+\sum_{i\neq x<y\neq j}C_{xy}^{ij}e_{xy}+\sum_{i\neq x<j}C_{xj}^{ij}e_{xj}+
   \sum_{i<y\neq j}C_{iy}^{ij}e_{iy}+C_{ij}^{ij}e_{ij}\\
&=\sum_{x\in X}C_{xx}^{ij}e_{xx}+C_{ij}^{ij}e_{ij},
\end{aligned}$$
which proves (2).
\end{proof}

\begin{lemma}\label{sec2.7}
Let $X$ be connected and $\theta: I(X,\mathcal{R})\rightarrow I(X,\mathcal{R})$ be an $\mathcal{R}$-linear
map satisfying the formulas ${\rm (\ref{(1)})}$ and ${\rm (\ref{(2)})}$. Then $\theta$ is a commuting map if and only if
the coefficients $C^{ij}_{xy}$ are subject to the following relations:
$$\begin{aligned}
&{\rm (R1)}\hspace{6pt} C_{il}^{il}=C_{ii}^{ii}-C_{ll}^{ii},  &&\text{if}\hspace{6pt} i<l;\\
&{\rm (R2)}\hspace{6pt} C_{ki}^{ki}=C_{ii}^{ii}-C_{kk}^{ii},  &&\text{if}\hspace{6pt} k<i;\\
&{\rm (R3)}\hspace{6pt} C_{kk}^{ii}=C_{ll}^{ii},  &&\text{if}\hspace{6pt} k<l\ \text{and}\ k\neq i\neq l;\\
&{\rm (R4)}\hspace{6pt} C_{xx}^{ij}=C_{yy}^{ij}, &&\forall\hspace{6pt} x,y\in X;\\
&{\rm (R5)}\hspace{6pt} C_{ij}^{ij}=C_{jl}^{jl},  &&\text{if}\hspace{6pt} i<j<l.
\end{aligned}$$
\end{lemma}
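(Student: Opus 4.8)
The plan is to invoke the basis criterion of Lemma \ref{sec2.2}: since $\theta$ already has the shape forced by Lemma \ref{sec2.6}, it is a commuting map precisely when $[\theta(e_{ij}),e_{kl}]=[e_{ij},\theta(e_{kl})]$ for every pair of basis elements $e_{ij},e_{kl}\in\mathfrak{B}$. I would substitute the formulas (\ref{(1)}) and (\ref{(2)}) into both sides and expand all brackets with $e_{ab}e_{cd}=\delta_{bc}e_{ad}$. The verification splits into three cases according to whether each of the two basis elements is diagonal or strictly off-diagonal. The pair of diagonal elements contributes nothing, since by (\ref{(1)}) both $\theta(e_{ii})$ and $\theta(e_{kk})$ are diagonal and diagonal matrix units commute; moreover, by the antisymmetry of the defining identity the ``off-diagonal versus diagonal'' configuration coincides with the ``diagonal versus off-diagonal'' one, so only two genuinely different computations remain.

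First I treat $e_{ii}$ against $e_{kl}$ with $k<l$. A short computation gives $[\theta(e_{ii}),e_{kl}]=(C_{kk}^{ii}-C_{ll}^{ii})e_{kl}$ and $[e_{ii},\theta(e_{kl})]=C_{kl}^{kl}(\delta_{ik}-\delta_{il})e_{kl}$, so the commuting condition reads $C_{kk}^{ii}-C_{ll}^{ii}=C_{kl}^{kl}(\delta_{ik}-\delta_{il})$. Splitting on whether $i=k$, $i=l$, or $i\notin\{k,l\}$ yields exactly (R1), (R2) and (R3) respectively.

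The substantial case is $e_{ij}$ against $e_{kl}$ with $i<j$ and $k<l$. After expansion the identity becomes
$$
(C_{kk}^{ij}-C_{ll}^{ij})e_{kl}-(C_{jj}^{kl}-C_{ii}^{kl})e_{ij}+(C_{ij}^{ij}-C_{kl}^{kl})\bigl(\delta_{jk}e_{il}-\delta_{li}e_{kj}\bigr)=0 .
$$
Note that $j=k$ and $l=i$ cannot both hold (they would force $i<j=k<l=i$), so at most one cross term survives; and when $j=k$ one has $i<j<l$, so $e_{il}$ is distinct from $e_{kl}=e_{jl}$ and from $e_{ij}$ (symmetrically when $l=i$). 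Matching the coefficient of the surviving cross element $e_{il}$ (or $e_{kj}$) forces $C_{ij}^{ij}=C_{kl}^{kl}$, which is precisely (R5). Matching the coefficient of $e_{kl}$ yields, in every configuration, the single relation $C_{kk}^{ij}=C_{ll}^{ij}$; the only delicate point is the diagonal sub-case $(k,l)=(i,j)$, where the two $e_{ij}$-coefficients coalesce into $2(C_{ii}^{ij}-C_{jj}^{ij})e_{ij}$, and one invokes $2$-torsion-freeness to conclude $C_{ii}^{ij}=C_{jj}^{ij}$.

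The main obstacle, and the point where connectivity is essential, is upgrading these pairwise equalities to the global statement (R4). The previous step shows that for each fixed label $(i,j)$ and \emph{every} pair $k<l$ one has $C_{kk}^{ij}=C_{ll}^{ij}$; that is, the value $x\mapsto C_{xx}^{ij}$ is constant along every edge $x\sim y$ of $X$. Since $X$ is connected with respect to $\sim$, this function is constant on all of $X$, which is exactly (R4). For the converse I would run the same expansions in reverse: assuming (R1)--(R5), each of the coefficient identities above holds, because the auxiliary equalities $C_{jj}^{ij}=C_{ll}^{ij}$, $C_{ii}^{kl}=C_{jj}^{kl}$, and so on are immediate from (R4); hence $[\theta(e_{ij}),e_{kl}]=[e_{ij},\theta(e_{kl})]$ throughout $\mathfrak{B}$, and Lemma \ref{sec2.2} completes the proof.
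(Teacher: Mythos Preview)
Your approach is the same as the paper's --- reduce via Lemma \ref{sec2.2} to basis commutators, expand using (\ref{(1)}) and (\ref{(2)}), and read off the relations --- and your case analysis is organised essentially as in the paper. There is, however, one genuine slip.

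You assert that ``$j=k$ and $l=i$ cannot both hold (they would force $i<j=k<l=i$)''. This is true for partial orders, but the paper works with \emph{pre}-orders: $\leqslant$ is only reflexive and transitive, so one may have $i\leqslant j$, $j\leqslant i$ and $i\neq j$, i.e.\ $i<j<i$. The paper treats this sub-case explicitly (its identities (16) and (17)); in your display the identity then reads
\[
(C_{jj}^{ij}-C_{ii}^{ij})e_{ji}-(C_{jj}^{ji}-C_{ii}^{ji})e_{ij}+(C_{ij}^{ij}-C_{ji}^{ji})(e_{ii}-e_{jj})=0,
\]
with \emph{both} cross terms surviving as the diagonal units $e_{ii},e_{jj}$. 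For the ``only if'' direction this omission is harmless: the extra relations $C_{ii}^{ij}=C_{jj}^{ij}$ and $C_{ij}^{ij}=C_{ji}^{ji}$ that this case yields are already consequences of what you have (the first from your $(k,l)=(i,j)$ sub-case, the second from (R1) and (R2) combined). But for the converse you must verify \emph{every} basis identity, including this one; since you dismissed it, your ``run the expansions in reverse'' is incomplete as written. The fix is immediate --- (R4) kills the first two coefficients and (R5) with $l=i$ kills the third --- but you should say so rather than claim the case does not exist.
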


\begin{proof}
By Lemma \ref{sec2.2}, we have $\theta$ is a commuting map if and only if
$[\theta(e_{ij}),e_{kl}]=[e_{ij},\theta(e_{kl})]$ for all $i\leqslant j$ and $k\leqslant l$.
There are four cases occurring depending on the formulas ${\rm (\ref{(1)})}$ or ${\rm (\ref{(2)})}$.

{\bf Case 1}. If $i=j$ and $k=l$, then $[\theta(e_{ii}),e_{kk}]=[e_{ii},\theta(e_{kk})]$
always holds.

{\bf Case 2}. If $i=j$ and $k\neq l$, then the formulas ${\rm (\ref{(1)})}$ and ${\rm (\ref{(2)})}$ imply that
$$
(C_{kk}^{ii}-C_{ll}^{ii})e_{kl}=(\delta_{ik}-\delta_{il})C_{kl}^{kl}e_{kl}.\eqno(9)
$$
Noting that $k\neq l$, hence we can get (R1) (resp. (R2)) from the identity (9) by setting $i=k$ (resp. $i=l$).
We can also get the relation (R3) from (9) by setting $k\neq i\neq l$.

{\bf Case 3}. If $i\neq j$ and $k=l$, this case is symmetric to the {\bf Case 2}.

{\bf Case 4}. If $i\neq j$ and $k\neq l$, it follows from the formula ${\rm (\ref{(2)})}$ that
$$
(C_{kk}^{ij}-C_{ll}^{ij})e_{kl}+\delta_{jk}(C_{ij}^{ij}-C_{kl}^{kl})e_{il}=
(C_{jj}^{kl}-C_{ii}^{kl})e_{ij}+\delta_{il}(C_{ij}^{ij}-C_{kl}^{kl})e_{kj}.\eqno(10)
$$
When $j\neq k$ and $i\neq l$, the identity (10) is equivalent to
$$
C_{kk}^{ij}=C_{ll}^{ij} \hspace{6pt} \text{for}\ j\neq k< l\neq i.\eqno(11)
$$
When $j\neq k$ and $i=l$ (hence $k<i<j$), the identity (10) can be rewritten as
$(C_{kk}^{ij}-C_{ii}^{ij})e_{ki}=(C_{jj}^{ki}-C_{ii}^{ki})e_{ij}+(C_{ij}^{ij}-C_{ki}^{ki})e_{kj}$,
which gives
$$
C_{kk}^{ij}=C_{ii}^{ij} \hspace{6pt} \text{for}\ j\neq k<i;\eqno(12)
$$
$$
C_{ii}^{ki}=C_{jj}^{ki} \hspace{6pt} \text{for}\ i<j\neq k;\eqno(13)
$$
$$
C_{ki}^{ki}=C_{ij}^{ij} \hspace{6pt} \text{for}\ j\neq k<i<j.\eqno(14)
$$
For convenience, substituting the indices we here rewrite the identity (13) as
$$
C_{jj}^{ij}=C_{ll}^{ij} \hspace{6pt} \text{for}\ j<l\neq i.\eqno(15)
$$
When $j=k$ and $i\neq l$, by symmetry we can also get the identities (12)-(15).
When $j=k$ and $i=l$ (hence $i<j<i$), the identity (10) is equivalent to
$(C_{jj}^{ij}-C_{ii}^{ij})e_{ji}+(C_{ij}^{ij}-C_{ji}^{ji})e_{ii}=
(C_{jj}^{ji}-C_{ii}^{ji})e_{ij}+(C_{ij}^{ij}-C_{ji}^{ji})e_{jj}$, which gives
$$
C_{jj}^{ij}=C_{ii}^{ij} \hspace{6pt} \text{for}\ j<i;\eqno(16)
$$
$$
C_{ij}^{ij}=C_{ji}^{ji} \hspace{6pt} \text{for}\ i<j<i.\eqno(17)
$$
Combining (14) and (17), we obtain the relation (R5). The identities (11), (12), (15) and (16) imply that
$$
C_{xx}^{ij}=C_{yy}^{ij} \hspace{6pt} \text{for all}\ x<y.\eqno(18)
$$
Since $X$ is connected, for any $x,y\in X$ there is a sequence $x=x_0,x_1,\ldots,x_s=y$ in $X$ such that
$x_{i-1}$ covers or is covered by $x_{i}$ for $1\leq i\leq s$. Then a recursive procedure,
using (18), on the length $s$ implies the desired relation (R4).
\end{proof}

\begin{remark}\label{sec2.8}
The reader may find that the set of the relations (R1-R5) in Lemma \ref{sec2.7} is not minimal. For example,
the relations (R1), (R2) and (R5) can imply the relation (R3). However, we present here the Lemma \ref{sec2.7} for
later use.
\end{remark}

\begin{proof}[Proof of Theorem \ref{main in section 2}]
Let $\theta$ be an arbitrary commuting map of the incidence algebra
$I(X,\mathcal{R})$. Then $\theta$ has the form ${\rm (\ref{(1)})}$ and ${\rm (\ref{(2)})}$ in Lemma \ref{sec2.6}, where
the coefficients $C^{ij}_{xy}$ satisfy the relations (R1-R5) in Lemma \ref{sec2.7}.
If $|X|=1$, the incidence algebra $I(X,\mathcal{R})\cong \mathcal{R}$, and hence Theorem \ref{main in section 2} is clear.
We now assume $|X|\geq 2$ and first study the sufficiency.

We claim that the relation (R3) can be strengthened as $C_{xx}^{ii}=C_{yy}^{ii}$ for $x\neq i\neq y$.
Since the set of all directed edges associated with $X$ (is assumed connected) forms
one equivalent class under the relation $\cong$, there is a circle contains both
$x$ and $y$. Breaking the circle at $x$ and $y$,
there exists a sequence $x=x_1,x_2,\ldots,x_s=y$ of different vertices in $X$ such that
$x_1\sim x_2, \ldots, x_i\sim x_{i+1}, \ldots, x_{s-1}\sim x_s$ and $i\notin \{x_1,x_2,\ldots,x_s\}$.
A recursive procedure, using (R3), on the length $s$ implies the desired claim.

We then claim that $C_{ij}^{ij}=C_{kl}^{kl}$ for all $i<j$ and $k<l$.
By (R1), we have $C_{ij}^{ij}+C_{jj}^{ii}=C_{ii}^{ii}=C_{il}^{il}+C_{ll}^{ii}$ and
we just proved that $C_{jj}^{ii}=C_{ll}^{ii}$ for $j\neq i\neq l$. Hence
$$
C_{ij}^{ij}=C_{il}^{il} \hspace{6pt} \text{for}\ i<j\ \text{and}\ i<l.\eqno(19)
$$
Similarly, we have
$$
C_{ij}^{ij}=C_{kj}^{kj} \hspace{6pt} \text{for}\ i<j\ \text{and}\ k<j.\eqno(20)
$$
Since $X$ is connected, there is a path (forget the direction) form $j$ to $k$, i.e.,
there exists a series $j=x_1,x_2,\ldots,x_s=k$ in $X$ such that
$x_1\sim x_2, \ldots, x_i\sim x_{i+1}, \ldots, x_{s-1}\sim x_s$.
Using (19), (20) and the relation (R5), an induction on the index $s$ implies the desired claim.

For any $i\in X$, since $X$ is connected and $|X|\geq 2$, there is a vertex comparable with $i$.
Let's define an $\mathcal{R}$-linear map $L: I(X,\mathcal{R})\rightarrow I(X,\mathcal{R})$ by
$L(e_{ij})=C_{ij}^{ij}e_{ij}$ for $i<j$ and $L(e_{ii})=C_{ij}^{ij}e_{ii}$ if there exists a
vertex $j$ such that $i<j$, or $L(e_{ii})=C_{ki}^{ki}e_{ii}$ if there exists a
vertex $k$ such that $k<i$. Note that $C_{ij}^{ij}=C_{kl}^{kl}$ for all $i<j$ and $k<l$.
The map $L$ is well-defined and is of the form $L(f)=\lambda f$, where $\lambda=C_{ij}^{ij}$
for $i<j$ and $f\in I(X,\mathcal{R})$.

Let $\mu:=\theta-L$. If there is a vertex $j$ such that $i<j$, then
$\mu(e_{ii})=\sum_{x\neq i}C_{xx}^{ii}e_{xx}+(C_{ii}^{ii}-C_{ij}^{ij})e_{ii}$.
Combining the fact $C_{xx}^{ii}=C_{yy}^{ii}$ for $x\neq i\neq y$ and the relation (R1),
we get $\mu(e_{ii})\in Z(I(X,\mathcal{R}))$, the center of $I(X,\mathcal{R})$. Similarly,
we can get $\mu(e_{ii})\in Z(I(X,\mathcal{R}))$ if there is a vertex $k$ such that $k<i$.
Finally the relation (R4) implies $\mu(e_{ij})\in Z(I(X,\mathcal{R}))$ for all $i<j$.
Hence $\mu$ is a central-valued linear map and $\theta$ is proper.

We now prove the necessity. By the hypothesis, $\theta$ is proper. Hence
$C_{ij}^{ij}=C_{kl}^{kl}$ for all $i<j$ and $k<l$.
For any $i\in X$, since $X$ is connected and $|X|\geq 2$, there is a vertex comparable with $i$.
Therefore, either
$C_{ii}^{ii}-C_{ij}^{ij}=C_{xx}^{ii}$ for $x\neq i<j$ or $C_{ii}^{ii}-C_{ki}^{ki}=C_{xx}^{ii}$ for $k<i\neq x$.
We can get $C_{xx}^{ii}=C_{yy}^{ii}$ for $x\neq i\neq y$ from each case.
If the set of all directed edges associated with $X$ does not form
one equivalent class under the relation $\cong$, then there exist two vertices $j$ and $k$
such that they can not be contained in any circle. In this case, we must have $|X|\geq 3$. In fact,
when $|X|=2$, every commuting map of $I(X,\mathcal{R})$ is proper and the set of all directed edges associated with $X$ forms
one equivalent class. Fix one vertex $i$ which is different from $j$ and $k$.
The connectivity of $X$ shows that there is a path (forget the direction) form
$i$ to $j$ (resp. a path from $i$ to $k$). By (R3), we can assume $j$ and $k$
are comparable to $i$ without lose of generality.
Noting that $j$ and $k$ do not be contained in any circle,
we can construct, by Lemmas \ref{sec2.6} and \ref{sec2.7}, a commuting map $\theta$ such that $C_{jj}^{ii}\neq C_{kk}^{ii}$,
a contradiction. We completes the proof of Theorem \ref{main in section 2}.
\end{proof}

From Theorem \ref{main in section 2}, we immediately have the following corollaries.

\begin{corollary}\label{sec2.9}
Let $\mathcal{R}$ be a $2$-torsion free commutative ring with unity and
${\rm T}_n(\mathcal{R})$ be the upper (or lower) triangular matrix algebra over $\mathcal{R}$.
Then every commuting map of ${\rm T}_n(\mathcal{R})$ is proper.
\end{corollary}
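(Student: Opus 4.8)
The plan is to derive this as a direct specialization of Theorem \ref{main in section 2}. First I would observe that the upper triangular matrix algebra ${\rm T}_n(\mathcal{R})$ is exactly the incidence algebra $I(X,\mathcal{R})$ of the chain $X=\{1<2<\cdots<n\}$, while the lower triangular case corresponds to the reversed chain, which is again a totally ordered set. Since any two elements of a chain are comparable, the relation $\sim$ holds between every pair of distinct vertices; in particular $X$ is connected, i.e.\ it consists of a single connected component. Thus, by Theorem \ref{main in section 2}, it suffices to verify that the set $\mathfrak{D}$ of all directed edges forms one equivalence class under the relation $\cong$.

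Next I would check this equivalence-class condition directly, exploiting that in a chain every set of distinct vertices is pairwise comparable, so any cyclic arrangement of $n\geq 2$ distinct vertices is a circle in the sense of Definition \ref{sec2.3}. Given two edges $(i,j)$ and $(k,l)$ in $\mathfrak{D}$, I would exhibit a circle containing both adjacencies $i\sim j$ and $k\sim l$ via a short case analysis. If $\{i,j\}\cap\{k,l\}=\emptyset$, the four distinct vertices arranged cyclically as $i,j,k,l$ form a circle in which $i,j$ and $k,l$ appear as consecutive pairs; since $X$ is totally ordered, the required adjacencies $i\sim j$, $j\sim k$, $k\sim l$, $l\sim i$ all hold. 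If the two edges share exactly one vertex, then the three vertices involved are distinct (using $i<j$, $k<l$), and their cyclic arrangement gives a circle containing both edges. Finally, if $(i,j)=(k,l)$, the reflexive ($n=2$) case applies. Hence $(i,j)\cong(k,l)$ for all edges, so $\mathfrak{D}$ is a single $\cong$-class, and the corollary follows from Theorem \ref{main in section 2}.

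I do not anticipate a genuine obstacle, as the statement is a specialization of the main theorem; the only points needing care are the bookkeeping in the case analysis—ensuring the chosen vertices are genuinely distinct so that each constructed sequence really is a circle—and the degenerate small cases $n=1$, where $I(X,\mathcal{R})\cong\mathcal{R}$ and every commuting map is trivially proper, and $n=2$, where $\mathfrak{D}$ contains a single edge. Once these verifications are in place the conclusion is immediate.
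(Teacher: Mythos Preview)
Your proposal is correct and follows the same approach as the paper, which simply notes that the corollary is immediate from Theorem~\ref{main in section 2} applied to the chain $X=\{1<2<\cdots<n\}$. You supply the explicit verification that $\mathfrak{D}$ is a single $\cong$-class, which the paper omits, but the route is identical.
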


\begin{corollary}\label{sec2.10}
Let $\mathcal{R}$ be a $2$-torsion free commutative ring with unity and
${\rm M}_n(\mathcal{R})$ be the full matrix algebra over $\mathcal{R}$.
Then every commuting map of ${\rm M}_n(\mathcal{R})$ is proper.
\end{corollary}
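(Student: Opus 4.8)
The plan is to realize $\mathrm{M}_n(\mathcal{R})$ as an incidence algebra and then simply check the combinatorial hypothesis of Theorem \ref{main in section 2}. First I would take $X=\{1,2,\ldots,n\}$ equipped with the \emph{full} pre-order in which $i\leqslant j$ for every pair $i,j\in X$; this relation is obviously reflexive, transitive, and locally finite (each interval is contained in the finite set $X$). Under this pre-order the defining condition $f(x,y)=0$ for $x\nleqslant y$ is vacuous, so $I(X,\mathcal{R})$ consists of all functions $X\times X\to\mathcal{R}$, and the convolution $(fg)(x,y)=\sum_{x\leqslant z\leqslant y}f(x,z)g(z,y)=\sum_{z\in X}f(x,z)g(z,y)$ is exactly matrix multiplication. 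Hence $I(X,\mathcal{R})\cong\mathrm{M}_n(\mathcal{R})$, with each $e_{xy}$ corresponding to the usual matrix unit and $\mathfrak{B}$ to the standard basis.

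Next I would dispose of the trivial case $n=1$, where $\mathrm{M}_1(\mathcal{R})\cong\mathcal{R}$ is commutative, so every $\mathcal{R}$-linear map is automatically proper (take $\lambda=0$ and $\mu=\theta$). For $n\geq 2$, since $i\leqslant j$ holds for all $i,j$, every pair of distinct vertices satisfies $i\sim j$, so the graph associated with $X$ is the complete graph on $n$ vertices. In particular $X$ is connected, and Theorem \ref{main in section 2} reduces the claim to verifying that the set $\mathfrak{D}$ of all directed edges forms a single equivalence class under $\cong$.

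Finally I would check this $\cong$-class condition. Given two directed edges $(x,y)$ and $(u,v)$ in $\mathfrak{D}$, I must exhibit one circle containing both $x\sim y$ and $u\sim v$. Because every pair of vertices is $\sim$-related, this is routine: if $\{x,y\}=\{u,v\}$ and $n=2$, the two-vertex circle $\{x,y\}$ already contains both (recall $x\sim y$ and $y\sim x$ denote the same relation), while for $n\geq 3$ one enlarges to a triangle $\{x,y,w\}$; and if $\{x,y\}\neq\{u,v\}$, the three or four distinct vertices involved can be arranged into a cyclic sequence in which $\{x,y\}$ and $\{u,v\}$ both appear as consecutive pairs, the remaining adjacencies being supplied by completeness. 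Thus $(x,y)\cong(u,v)$ for all edges, so $\mathfrak{D}$ is one $\cong$-class and Theorem \ref{main in section 2} gives that every commuting map of $\mathrm{M}_n(\mathcal{R})$ is proper.

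I expect no genuine obstacle here, since the substantive work is already packaged in Theorem \ref{main in section 2}; the only point needing a moment's care is the last paragraph, whose content reduces to the elementary observation that a complete graph on at least two vertices admits a circle through any prescribed pair of edges.
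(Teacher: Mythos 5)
Your proposal is correct and matches the paper's (implicit) argument: the paper derives this corollary immediately from Theorem \ref{main in section 2} by viewing $\mathrm{M}_n(\mathcal{R})$ as the incidence algebra of $\{1,\ldots,n\}$ under the full pre-order $i\leqslant j$ for all $i,j$, exactly as you do. Your verification that the complete graph is connected and that any two directed edges lie on a common circle (a $2$-circle, triangle, or $4$-cycle, depending on how the edges overlap) is precisely the routine check the paper leaves to the reader.
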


\section{The general case}\label{xxsec3}

In this section, we extend the Theorem \ref{main in section 2} to the case of $X$ being a locally finite pre-ordered set.
Let $\tilde{I}(X,\mathcal{R})$ be the $\mathcal{R}$-subspace of $I(X,\mathcal{R})$
generated by the elements $e_{xy}$ with $x\leqslant y$. That means $\tilde{I}(X,\mathcal{R})$
consists exactly of the functions $f\in I(X,\mathcal{R})$ which are nonzero only at a finite number of $(x,y)$.
Clearly $\tilde{I}(X,\mathcal{R})$ is a subalgebra of $I(X,\mathcal{R})$.
Hence $I(X,\mathcal{R})$ becomes an $\tilde{I}(X,\mathcal{R})$-bimodule in the natural manner.
Let $\theta: \tilde{I}(X,\mathcal{R})\rightarrow I(X,\mathcal{R})$ be a commuting map, i.e.
$$
[\theta(f),f]=0
$$
for all $f\in \tilde{I}(X,\mathcal{R})$.
Observe that Lemmas \ref{sec2.6} and \ref{sec2.7} remain valid, when we replace the domain
of $\theta$ by $\tilde{I}(X,\mathcal{R})$. In fact, although the formal sums $L(e_{ij})=\sum_{x\leqslant y}C_{xy}^{ij}e_{xy}$ are now infinite,
multiplication by $e_{uv}$ on the left or on the right works as in the finite case.

Let's now recall some notations and results from \cite{ZhangKh}.
For any $f\in I(X,\mathcal{R})$ and $x\leqslant y$, {\it the restriction of $f$} to $\{z\in X\mid x\leqslant z\leqslant y\}$ is defined by
$$
f|_{x}^{y}=\sum _{x\leqslant u\leqslant v\leqslant y}f(u,v)e_{uv}
$$
Observe that the sum above is finite, and hence $f|_{x}^{y}\in \tilde{I}(X,\mathcal{R})$.
For any $f\in I(X,\mathcal{R})$ and $x\leqslant y$, the observation
$$
e_{xx}fe_{yy}=f(x,y)e_{xy} \eqno(21)
$$
will be frequently used.

\begin{lemma}\label{sec3.1} {\rm (\cite[Lemma 3.2 (ii)]{ZhangKh})}
For any $f\in I(X,\mathcal{R})$ and $x\leqslant y$, we have
$$
(fg)(x,y)=(f|_{x}^{y} g)(x,y)=(fg|_{x}^{y})(x,y)=(f|_{x}^{y} g|_{x}^{y})(x,y).
$$
\end{lemma}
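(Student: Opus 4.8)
The plan is to reduce every one of the three equalities to the defining formula for the convolution and to observe that the restriction operation $f\mapsto f|_{x}^{y}$ leaves untouched exactly those values of $f$ that enter the computation of $(fg)(x,y)$. Throughout I fix $f,g\in I(X,\mathcal{R})$ together with a pair $x\leqslant y$; local finiteness of $X$ guarantees that the interval $\{z\in X\mid x\leqslant z\leqslant y\}$ is finite, so all the convolution sums below are finite and well defined.

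First I would establish the leftmost equality $(fg)(x,y)=(f|_{x}^{y}g)(x,y)$. Expanding the right-hand side by the definition of the convolution gives $(f|_{x}^{y}g)(x,y)=\sum_{x\leqslant z\leqslant y}(f|_{x}^{y})(x,z)\,g(z,y)$. For each index $z$ occurring in this sum we have $x\leqslant x\leqslant z\leqslant y$, so the pair $(x,z)$ lies in the range over which $f|_{x}^{y}$ is defined to agree with $f$; indeed, from the formula $f|_{x}^{y}=\sum_{x\leqslant u\leqslant v\leqslant y}f(u,v)e_{uv}$ one reads off $(f|_{x}^{y})(x,z)=f(x,z)$. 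Substituting this back recovers $\sum_{x\leqslant z\leqslant y}f(x,z)\,g(z,y)=(fg)(x,y)$.

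The second equality $(fg)(x,y)=(fg|_{x}^{y})(x,y)$ is entirely symmetric: the convolution sum is now $\sum_{x\leqslant z\leqslant y}f(x,z)\,(g|_{x}^{y})(z,y)$, and for every summation index $z$ one has $x\leqslant z\leqslant y\leqslant y$, so $(z,y)$ falls in the support on which $g|_{x}^{y}$ equals $g$, giving $(g|_{x}^{y})(z,y)=g(z,y)$. For the final equality $(fg)(x,y)=(f|_{x}^{y}g|_{x}^{y})(x,y)$ I would simply combine the two observations at once: in $\sum_{x\leqslant z\leqslant y}(f|_{x}^{y})(x,z)\,(g|_{x}^{y})(z,y)$ both factors coincide with the corresponding values of $f$ and $g$ over the whole range of summation, so the sum again collapses to $(fg)(x,y)$.

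There is no genuine obstacle in this argument; the only point requiring a moment's care is the bookkeeping of the support of $f|_{x}^{y}$, namely verifying that the pairs $(x,z)$ and $(z,y)$ produced by the convolution sum always satisfy the chain of inequalities $x\leqslant u\leqslant v\leqslant y$ built into the definition of the restriction. One could alternatively phrase the whole proof through identity (21), writing $(fg)(x,y)$ as the coefficient of $e_{xy}$ in $e_{xx}(fg)e_{yy}$ and inserting idempotents in the middle, but the direct coefficient computation above seems the cleanest route.
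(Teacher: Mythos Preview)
Your argument is correct: the convolution $(fg)(x,y)$ involves only the values $f(x,z)$ and $g(z,y)$ with $x\leqslant z\leqslant y$, and the restriction $f|_{x}^{y}$ agrees with $f$ on precisely such pairs, so each of the three equalities follows immediately. The paper does not supply its own proof of this lemma but simply cites \cite[Lemma~3.2~(ii)]{ZhangKh}; your direct coefficient computation is the natural justification and is exactly what lies behind that citation.
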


\begin{lemma}\label{sec3.2}
Let $\theta$ be a commuting map of $I(X,\mathcal{R})$ and $x<y$. Then
$$
\theta(f)(x,y)=\theta(f|_{x}^{y})(x,y).
$$
\end{lemma}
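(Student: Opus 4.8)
The plan is to prove $\theta(f)(x,y) = \theta(f|_x^y)(x,y)$ by showing that the difference $g := f - f|_x^y$ contributes nothing to the $(x,y)$-entry of $\theta(g)$. Using formula (21), extracting the $(x,y)$-entry means computing $(e_{xx}\,\theta(g)\,e_{yy})$, so I would like to relate $e_{xx}\theta(g)e_{yy}$ to the commuting condition. The natural tool is Lemma \ref{sec2.2} (valid here by the remark that Lemmas \ref{sec2.6} and \ref{sec2.7} survive when the domain is $\tilde I(X,\mathcal R)$): the polarized identity $[\theta(a),b]=[a,\theta(b)]$ holds for all $a,b$ in $\tilde I(X,\mathcal R)$. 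In particular, taking $b = e_{xx}$ and $b = e_{yy}$ and sandwiching appropriately should isolate the relevant entries.

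First I would write $f = f|_x^y + g$ by $\mathcal R$-linearity of $\theta$, so the claim reduces to $\theta(g)(x,y) = 0$, where $g$ is supported on pairs $(u,v)$ with $u\leqslant v$ but $(u,v) \notin \{(u',v') : x\leqslant u'\leqslant v'\leqslant y\}$. Since $g$ need not lie in $\tilde I(X,\mathcal R)$ in general, I would instead argue directly on entries. Concretely, I would use the polarized commuting identity with the idempotents: from $[\theta(e_{xx}),f] = [e_{xx},\theta(f)]$ and $[\theta(e_{yy}),f] = [e_{yy},\theta(f)]$, multiply on suitable sides by $e_{xx}$ and $e_{yy}$ to express $\theta(f)(x,y) = (e_{xx}\theta(f)e_{yy})(x,y)$ in terms of $f$ restricted to the block between $x$ and $y$. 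The key mechanism is that the convolution products $(e_{xx}\theta(f)e_{yy})$ only see the value of $f$ at pairs $(u,v)$ lying inside the interval $[x,y]$, by the interval-restriction principle of Lemma \ref{sec3.1}: any entry of a product $\theta(f)$ at position $(x,y)$ factors through sums $\sum_{x\leqslant z\leqslant y}$, and the commuting relations force the off-interval coefficients $C_{xy}^{uv}$ to vanish whenever $(u,v)$ is not contained in the interval $[x,y]$.

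The cleanest route is to invoke the explicit coefficient description already established. By Lemma \ref{sec2.6} (which remains valid over $\tilde I(X,\mathcal R)$), $\theta(e_{uv}) = \sum_{w\in X}C_{ww}^{uv}e_{ww} + C_{uv}^{uv}e_{uv}$ for $u<v$, and $\theta(e_{uu})=\sum_{w}C_{ww}^{uu}e_{ww}$. Hence $\theta(e_{uv})(x,y)$ with $x<y$ is nonzero only when $(u,v)=(x,y)$, giving $C_{xy}^{xy}$. By linearity, $\theta(f)(x,y) = \sum_{u\leqslant v}f(u,v)\,\theta(e_{uv})(x,y) = f(x,y)\,C_{xy}^{xy}$, which depends only on the single coefficient $f(x,y)$. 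Since $f(x,y)=f|_x^y(x,y)$ by definition of the restriction, and the same computation gives $\theta(f|_x^y)(x,y)=f|_x^y(x,y)\,C_{xy}^{xy}$, the two agree. This reduces the whole statement to the off-diagonal entry formula, which is immediate from Lemma \ref{sec2.6}.

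The main obstacle to watch is the passage from the finite to the infinite setting: $\theta$ is now defined on $\tilde I(X,\mathcal R)$ but takes values in the full algebra $I(X,\mathcal R)$, so when I expand $\theta(f)=\sum_{u\leqslant v}f(u,v)\theta(e_{uv})$ for $f\in\tilde I(X,\mathcal R)$ I must ensure this is a finite $\mathcal R$-combination (it is, since $f$ has finite support) and that evaluating the resulting possibly-infinite formal sum at a fixed $(x,y)$ is legitimate. Evaluation at one fixed pair selects only finitely many terms, so no convergence issue arises; this is exactly the point flagged after the introduction of $\tilde I(X,\mathcal R)$ that ``multiplication by $e_{uv}$ on the left or on the right works as in the finite case.'' I would make this observation explicit and then the argument is routine.
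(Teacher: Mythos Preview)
There is a genuine gap. The lemma concerns an arbitrary $f\in I(X,\mathcal R)$ --- that is precisely its purpose, to reduce $\theta(f)(x,y)$ for a \emph{general} $f$ to the finite-support element $f|_x^y$. Your ``cleanest route'' expands $\theta(f)=\sum_{u\leqslant v}f(u,v)\,\theta(e_{uv})$, but this is valid only when $f$ has finite support; for general $f$ it would require $\theta$ to commute with an infinite formal sum, and $\theta$ is merely $\mathcal R$-linear. Your final paragraph makes the restriction explicit by taking $f\in\tilde I(X,\mathcal R)$ (and even misstates the domain of $\theta$ as $\tilde I(X,\mathcal R)$, whereas in Lemma~\ref{sec3.2} it is the full algebra). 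With that restriction you have proved only the case the lemma is meant to \emph{reduce to}, not the lemma itself.

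The second paragraph of your plan already contains the correct mechanism, which is exactly the paper's proof. Since $\theta$ is commuting on all of $I(X,\mathcal R)$, the polarized identity $[e_{xx},\theta(f)]=[\theta(e_{xx}),f]$ holds for every $f\in I(X,\mathcal R)$. Because $x<y$ one has $\theta(f)(x,y)=[e_{xx},\theta(f)](x,y)=[\theta(e_{xx}),f](x,y)$, and now Lemma~\ref{sec3.1} --- which is a statement about \emph{multiplication}, not about $\theta$ --- lets you replace $f$ by $f|_x^y$ inside the bracket. Reversing the chain with $f|_x^y$ in place of $f$ gives $\theta(f|_x^y)(x,y)$. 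You should carry this four-line computation through rather than detouring to the basis expansion, which cannot be justified in the infinite-support case.
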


\begin{proof}
It follows from $(21)$ that
$$\begin{aligned}
\theta(f)(x,y)&=[e_{xx},\theta(f)](x,y)=[\theta(e_{xx}),f](x,y)\\
&=[\theta(e_{xx}),f|_{x}^{y}](x,y)=\theta(f|_{x}^{y})(x,y),
\end{aligned}$$
where the third identity relies on Lemma \ref{sec3.1}.
\end{proof}

For the locally finite pre-ordered set $X$, we set $\mathfrak{D}:=\{(x,y)\mid x< y\}$
and also define the equivalent relation $\cong$ on $\mathfrak{D}$ as those in
Definition \ref{sec2.3}.

\begin{theorem}\label{sec3.3}
Let $\theta$ be a commuting map of $I(X,\mathcal{R})$. If $\mathfrak{D}$ forms
one equivalent class under the relation $\cong$, then $\theta$ is proper.
\end{theorem}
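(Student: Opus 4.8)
The plan is to imitate the connected finite case of Theorem~\ref{main in section 2}, but to replace the naive expansion of $\theta(f)$ by linearity---no longer legitimate now that $f\in I(X,\mathcal{R})$ may have infinite support---with the restriction device of Lemma~\ref{sec3.2}. By the paragraph preceding Lemma~\ref{sec3.1}, Lemmas~\ref{sec2.6} and~\ref{sec2.7} remain valid for $\theta$ evaluated on the basis elements $e_{ij}$, so I may write $\theta(e_{ii})=\sum_{x}C^{ii}_{xx}e_{xx}$ and $\theta(e_{ij})=\sum_{x}C^{ij}_{xx}e_{xx}+C^{ij}_{ij}e_{ij}$ (for $i\neq j$) with coefficients obeying (R1)--(R5). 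The case $|X|=1$ is trivial, so I assume otherwise; any vertex incomparable to all others spans a singleton (hence central) summand and may be set aside, so I concentrate on the vertices carrying edges, which all lie in a single connected component because $\mathfrak{D}$ forms one $\cong$-class.

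\textbf{Step 1 (a universal scalar).} First I would reproduce, essentially verbatim, the two combinatorial reductions from the proof of Theorem~\ref{main in section 2}: that (R3) strengthens to $C^{ii}_{xx}=C^{ii}_{yy}$ for $x\neq i\neq y$, and that, consequently, $C^{ij}_{ij}$ takes one common value $\lambda$ over all edges $(i,j)$. The first uses a circle through $x$ and $y$ avoiding $i$---available precisely because $\mathfrak{D}$ is one equivalence class---while the second combines (R1), (R5) and connectivity along a path from $j$ to $k$. Every circle and path involved is finite, so these arguments are insensitive to $X$ being infinite, and they isolate the scalar $\lambda$ to be peeled off.

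\textbf{Step 2 (the off-diagonal part).} Fix $x<y$. By Lemma~\ref{sec3.2}, $\theta(f)(x,y)=\theta(f|^{y}_{x})(x,y)$, and $f|^{y}_{x}=\sum_{x\leqslant u\leqslant v\leqslant y}f(u,v)e_{uv}$ is a \emph{finite} combination, so genuine linearity applies and $\theta(f)(x,y)=\sum_{x\leqslant u\leqslant v\leqslant y}f(u,v)\,\theta(e_{uv})(x,y)$. By ${\rm (\ref{(2)})}$ the only off-diagonal entry of $\theta(e_{uv})$ sits at $(u,v)$ with value $\lambda$, while $\theta(e_{uu})$ is diagonal by ${\rm (\ref{(1)})}$; hence $\theta(e_{uv})(x,y)$ vanishes unless $(u,v)=(x,y)$, giving $\theta(f)(x,y)=\lambda f(x,y)$. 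Setting $\mu(f):=\theta(f)-\lambda f$, this says $\mu(f)(x,y)=0$ for all $x<y$ (and trivially for $x\nleqslant y$), so $\mu(f)$ is supported on the diagonal.

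\textbf{Step 3 (the diagonal part) --- the crux.} It remains to show $\mu(f)$ is constant along the diagonal of the edge-carrying component, which is what makes it central (a diagonal element constant on each connected component is central). Here Lemma~\ref{sec3.2} is useless, since it requires $x<y$, so instead I polarize $[\theta(h),h]=0$ to the identity $[\theta(f),g]=[f,\theta(g)]$ and take $g=e_{xy}$ for a comparable pair $x<y$. Evaluating both sides at $(x,y)$ via (21), the left side equals $\theta(f)(x,x)-\theta(f)(y,y)$, while the right side, using ${\rm (\ref{(2)})}$ for $\theta(e_{xy})$, equals $f(x,y)\bigl(C^{xy}_{yy}-C^{xy}_{xx}\bigr)+\lambda\bigl(f(x,x)-f(y,y)\bigr)$. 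Since $C^{xy}_{yy}-C^{xy}_{xx}=0$ by (R4), subtracting $\lambda\bigl(f(x,x)-f(y,y)\bigr)$ yields $\mu(f)(x,x)=\mu(f)(y,y)$ for every comparable pair; propagating along paths makes $\mu(f)$ constant on the component (singletons impose no condition), so $\mu(f)\in Z(I(X,\mathcal{R}))$. With Step~2 this gives $\theta(f)=\lambda f+\mu(f)$ with $\lambda\delta$ central and $\mu$ central-valued, so $\theta$ is proper. I expect Step~3 to be the main obstacle: off-diagonal entries are localized cleanly by Lemma~\ref{sec3.2}, but the diagonal entries of $\theta(f)$ cannot be localized that way for infinitely supported $f$, and the argument hinges on the exact cancellation furnished by (R4) after polarizing; a secondary point is to confirm that all inherited combinatorial steps use only finite circles and paths, so no convergence issue arises in passing from finite to locally finite $X$.
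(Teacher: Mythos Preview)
Your proposal is correct and follows the same architecture as the paper: extract the scalar $\lambda$ from the finite-rank calculations of Lemmas~\ref{sec2.6}--\ref{sec2.7}, use Lemma~\ref{sec3.2} to zero out the off-diagonal entries of $\mu:=\theta-\lambda\cdot\mathrm{id}$, and then polarize against $e_{xy}$ and evaluate at $(x,y)$ to force $\mu(f)(x,x)=\mu(f)(y,y)$ for $x<y$. The only cosmetic differences are that the paper packages Step~1 as a single appeal to Theorem~\ref{main in section 2} on the restriction $\theta|_{\tilde I(X,\mathcal{R})}$, and in Step~3 it routes the polarized identity through $[\mu(e_{xy}),f|_x^y]$ via Lemma~\ref{sec3.1} and the already-known centrality of $\tilde\mu(f|_x^y)$, whereas you expand $[f,\theta(e_{xy})](x,y)$ directly from (\ref{(2)}) and invoke (R4)---the same cancellation either way.
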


\begin{proof}
Restricting $\theta$ to $\tilde{I}(X,\mathcal{R})$, denoting as $\theta_{\tilde{I}}$,
we get that $\theta_{\tilde{I}}: \tilde{I}(X,\mathcal{R})\rightarrow I(X,\mathcal{R})$
is a commuting map. Then $\theta_{\tilde{I}}$ is proper by Theorem \ref{main in section 2}.
By the hypothesis, $X$ is connected and hence $Z(I(X,\mathcal{R}))\cong \mathcal{R}$ by
\cite[Corollary 1.3.15]{SpDo}. There exists $\lambda\in \mathcal{R}$ such that
$\theta_{\tilde{I}}(f)=\lambda f+\tilde{\mu}(f)$, where $f\in \tilde{I}(X,\mathcal{R})$ and
$\tilde{\mu}: \tilde{I}(X,\mathcal{R})\rightarrow Z(I(X,\mathcal{R}))$ is an $\mathcal{R}$-linear map.

Define $\mu: I(X,\mathcal{R})\rightarrow I(X,\mathcal{R})$ by
$\mu(f):=\theta(f)-\lambda f$
for all $f\in I(X,\mathcal{R})$. We only need to prove the commuting map $\mu$ is central-valued.
Notice that $\mu(f)=\tilde{\mu}(f)\in Z(I(X,\mathcal{R}))$ for all $f\in \tilde{I}(X,\mathcal{R})$.
For $x<y$ and $f\in I(X,\mathcal{R})$, Lemma \ref{sec3.2} shows $\mu(f)(x,y)=\mu(f|_{x}^{y})(x,y)
=\tilde{\mu}(f|_{x}^{y})(x,y)=0$. We now prove $\mu(f)(x,x)=\mu(f)(y,y)$
for all $f\in I(X,\mathcal{R})$ and $x,y\in X$.
Since $X$ is connected, we only need to prove $\mu(f)(x,x)=\mu(f)(y,y)$
for $x<y$. On one hand, by (21),
$$
[e_{xy},\mu(f)](x,y)=\mu(f)(y,y)-\mu(f)(x,x).
$$
On the other hand, by Lemma \ref{sec3.1},
$$\begin{aligned}
&\hspace{12pt}[e_{xy},\mu(f)](x,y)=[\mu(e_{xy}),f](x,y)=[\mu(e_{xy}),f|_{x}^{y}](x,y)\\
&=[e_{xy},\mu(f|_{x}^{y})](x,y)=\mu(f|_{x}^{y})(y,y)-\mu(f|_{x}^{y})(x,x)\\
&=\tilde{\mu}(f|_{x}^{y})(y,y)-\tilde{\mu}(f|_{x}^{y})(x,x)=0.
\end{aligned}$$
Therefore, it follows from \cite[Theorem 1.3.13]{SpDo} that
$\mu$ is central-valued and hence $\theta$ is proper.
\end{proof}

\begin{corollary}\label{sec3.4}
Let $\mathcal{R}$ be a $2$-torsion free commutative ring with unity. Let
${\rm T}_{\infty}(\mathcal{R})$ be the ring of countable upper triangular $\mathcal{R}$-matrices.
Then every commuting map of ${\rm T}_{\infty}(\mathcal{R})$ is proper.
\end{corollary}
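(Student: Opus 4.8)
The plan is to realize ${\rm T}_{\infty}(\mathcal{R})$ as an incidence algebra and then invoke Theorem \ref{sec3.3}. First I would observe that the ring of countable upper triangular matrices over $\mathcal{R}$ is exactly $I(X,\mathcal{R})$ for $X=\{1,2,3,\ldots\}$ equipped with the usual total order $\leqslant$: the matrix unit in the $(i,j)$ position corresponds to $e_{ij}$ for $i\leqslant j$, and the convolution product reproduces ordinary matrix multiplication. This $X$ is locally finite, since between any $i\leqslant j$ there are only finitely many integers $k$ with $i\leqslant k\leqslant j$, so $I(X,\mathcal{R})$ is a genuine incidence algebra in the sense of the paper.

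Next I would verify the hypothesis of Theorem \ref{sec3.3}, namely that $\mathfrak{D}=\{(x,y)\mid x<y\}$ forms a single equivalence class under $\cong$. Because the order on $X$ is total, any two distinct vertices are comparable, so $x\sim y$ holds for every pair $x,y\in X$. Given two edges $(x,y)$ and $(u,v)$ in $\mathfrak{D}$, let $S$ be the set of distinct vertices among $\{x,y,u,v\}$. Since every pair in $S$ is $\sim$-related, any cyclic listing of the vertices of $S$ constitutes a circle in the sense of Section \ref{xxsec2}; in particular I can list them so that $x$ is immediately followed by $y$ and $u$ is immediately followed by $v$. When the edges are disjoint one uses the $4$-circle $x\sim y\sim u\sim v\sim x$, when they share exactly one vertex one uses the corresponding $3$-circle, and when they coincide reflexivity is supplied by the $2$-circle $x\sim y$. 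Such a circle contains both $x\sim y$ and $u\sim v$, whence $(x,y)\cong(u,v)$. As the two edges were arbitrary, $\mathfrak{D}$ is one equivalence class.

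With this combinatorial condition verified, Theorem \ref{sec3.3} applies directly and tells us that every commuting map of $I(X,\mathcal{R})={\rm T}_{\infty}(\mathcal{R})$ is proper, which completes the argument. I expect no serious obstacle here: the entire difficulty has already been absorbed into Theorem \ref{sec3.3}, and the only point requiring care is the bookkeeping in the circle construction when the four indices fail to be distinct. I would simply note that totality of the order turns every finite vertex set into a complete graph, so a suitable circle through any prescribed pair of edges always exists.
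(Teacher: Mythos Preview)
Your proposal is correct and matches the paper's approach: the paper states Corollary \ref{sec3.4} immediately after Theorem \ref{sec3.3} without proof, treating it as a direct application once ${\rm T}_{\infty}(\mathcal{R})$ is identified with $I(X,\mathcal{R})$ for $X=\mathbb{N}$ under the usual total order. Your explicit verification that $\mathfrak{D}$ forms a single $\cong$-class (via the observation that a totally ordered set yields a complete $\sim$-graph, so any finite vertex set supports a circle through any prescribed pair of edges) simply spells out what the paper leaves implicit.
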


It is obvious that Theorem \ref{sec3.3} can be generalized to the case of
$X$ consisting of a finite number of connected components. Hence the following conjecture is natural.

\begin{conjecture}
Let $(X,\leqslant)$ be a locally finite pre-ordered set and $\mathcal{R}$ be $2$-torsion free.
If the set $\mathfrak{D}$ associated with each connected component forms
one equivalent class under the relation $\cong$,
then every commuting map of $I(X,\mathcal{R})$ is proper.
\end{conjecture}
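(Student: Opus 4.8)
The plan is to reduce the statement to the single-component result, Theorem \ref{sec3.3}, by decomposing along connected components and then reassembling the component-wise data into one central element. Write $X=\bigsqcup_{i\in I}X_i$ as the disjoint union of its connected components, where now the index set $I$ may be infinite. For each $i$ put $\delta_i:=\sum_{x\in X_i}e_{xx}$; by \cite[Theorem 1.3.13]{SpDo} each $\delta_i$ is a central idempotent of $I(X,\mathcal{R})$, the ideal $\delta_iI(X,\mathcal{R})$ is a subalgebra with identity $\delta_i$, and $\delta_iI(X,\mathcal{R})\cong I(X_i,\mathcal{R})$. Given a commuting map $\theta$ of $I(X,\mathcal{R})$, I would define $\theta_i\colon\delta_iI(X,\mathcal{R})\to\delta_iI(X,\mathcal{R})$ by $\theta_i(g):=\delta_i\theta(g)$. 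Because $\delta_i$ is central and $g=\delta_ig$, one has $[\theta_i(g),g]=\delta_i[\theta(g),g]=0$, so $\theta_i$ is a commuting map on $I(X_i,\mathcal{R})$. Since $X_i$ is connected and its associated $\mathfrak{D}$ forms a single $\cong$-class, Theorem \ref{sec3.3} applies and yields $\theta_i(g)=\lambda_ig+\mu_i(g)$ with $\lambda_i\in\mathcal{R}$ and $\mu_i$ taking values in $Z(\delta_iI(X,\mathcal{R}))=\mathcal{R}\delta_i$.

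Next I would assemble these scalars into a global center element. Set $\lambda:=\sum_{i\in I}\lambda_i\delta_i$, understood as the function with $\lambda(x,x)=\lambda_i$ for $x\in X_i$ and $\lambda(x,y)=0$ for $x<y$; this is a perfectly well-defined member of $I(X,\mathcal{R})$ even when $I$ is infinite, and it lies in $Z(I(X,\mathcal{R}))$ by \cite[Corollary 1.3.15]{SpDo} since it is constant on each component. Define $\mu\colon I(X,\mathcal{R})\to I(X,\mathcal{R})$ by $\mu(f):=\theta(f)-\lambda f$. As $\lambda$ is central, $[\mu(f),f]=[\theta(f),f]-[\lambda f,f]=0$, so $\mu$ is again a commuting map, and the whole statement reduces to showing that $\mu$ is central-valued.

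The heart of the argument, following the proof of Theorem \ref{sec3.3} essentially verbatim, is a localization that confines every computation to one component. First I would check that $\mu$ restricts correctly: for $g\in\delta_jI(X,\mathcal{R})$ one has $\lambda g=\lambda_jg$, so $\delta_j\mu(g)=\delta_j\theta(g)-\lambda_jg=\mu_j(g)$, whence $\mu(g)$ agrees with the central element $\mu_j(g)$ at every position lying in the block of $X_j$. To see $\mu(f)(x,y)=0$ for $x<y$, say with $x,y\in X_j$, I would invoke Lemma \ref{sec3.2} to get $\mu(f)(x,y)=\mu(f|_{x}^{y})(x,y)$; since $f|_{x}^{y}\in\delta_jI(X,\mathcal{R})$ and $(x,y)$ lies in the $X_j$-block, this equals $\mu_j(f|_{x}^{y})(x,y)$, which vanishes because $\mu_j(f|_{x}^{y})$ is central. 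For the diagonal, for $x<y$ in the same component I would use $(21)$ to write $[e_{xy},\mu(f)](x,y)=\mu(f)(y,y)-\mu(f)(x,x)$, then use that $\mu$ is commuting together with Lemma \ref{sec3.1} to replace $f$ by $f|_{x}^{y}$, reducing the right-hand side to $\mu_j(f|_{x}^{y})(y,y)-\mu_j(f|_{x}^{y})(x,x)=0$ by centrality of $\mu_j(f|_{x}^{y})$; connectedness of each component then propagates $\mu(f)(x,x)=\mu(f)(y,y)$ across the whole component. Thus $\mu(f)$ is diagonal and constant on components, hence central, and $\theta(f)=\lambda f+\mu(f)$ is proper.

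The main obstacle is precisely the possible infinitude of $I$: when there are infinitely many components, $I(X,\mathcal{R})\cong\prod_{i\in I}I(X_i,\mathcal{R})$ is a direct product rather than a direct sum, so the reduction by Lemma \ref{sec2.1}, which handles only finite direct sums, is unavailable, and a single scalar $\lambda$ no longer suffices. The way around this is that the commuting condition, through the locality of Lemmas \ref{sec3.1} and \ref{sec3.2}, forces every value $\mu(f)(x,y)$ and every diagonal comparison to depend only on the restriction $f|_{x}^{y}$, which lives in one connected component; consequently the per-component propriety scalars $\lambda_i$ can be read off independently and glued into the single central element $\lambda=\sum_{i\in I}\lambda_i\delta_i$ with no finiteness needed.
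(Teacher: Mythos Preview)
The paper does not prove this statement: it is explicitly labeled a \emph{conjecture}, and the authors only remark that the finite-component case follows obviously from Theorem~\ref{sec3.3}, leaving the general (possibly infinitely many components) case open. So there is no proof in the paper to compare against.

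That said, your argument appears to be a correct proof of the conjecture, and it is worth spelling out what makes it work where the paper stops. The obstruction the authors implicitly worry about is that Lemma~\ref{sec2.1} only handles finite direct sums, while for infinite $I$ one has $I(X,\mathcal{R})\cong\prod_{i\in I}I(X_i,\mathcal{R})$. Your key observation is that the per-component scalars $\lambda_i$ produced by Theorem~\ref{sec3.3} need not be equal, but the element $\lambda=\sum_{i\in I}\lambda_i\delta_i$ is nonetheless a perfectly good member of $Z(I(X,\mathcal{R}))$ even for infinite $I$, since at each pair $(x,y)$ only one summand contributes. Once $\mu:=\theta-\lambda\cdot(-)$ is formed, the locality of Lemmas~\ref{sec3.1} and~\ref{sec3.2} lets you compute every value $\mu(f)(x,y)$ in terms of $f|_x^y$, which lives in a single component block; the identity $\delta_j\mu(g)=\mu_j(g)$ for $g\in\delta_jI(X,\mathcal{R})$ then reduces everything to the already-established centrality of $\mu_j$. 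The verification that $\mu(f)$ is diagonal and constant on each component is exactly the computation in the proof of Theorem~\ref{sec3.3}, and \cite[Theorem 1.3.13]{SpDo} finishes. One minor point you might make explicit: if some component $X_i$ is a singleton, the hypothesis on $\mathfrak{D}$ is vacuous and $I(X_i,\mathcal{R})\cong\mathcal{R}$, so Theorem~\ref{sec3.3} is not literally invoked there, but any choice of $\lambda_i$ (say $\lambda_i=0$) works trivially.
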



\end{document}